\DeclareMathAlphabet{\mathcal}{OMS}{cmsy}{m}{n}
\newcommand{\citep}{\cite}
\newtheorem{theorem}{Theorem}
\newtheorem{lemma}{Lemma}
\newtheorem{remark}{Remark}
\newtheorem{proposition}{Proposition}
\newtheorem{definition}{Definition}
\newcommand{\del}{\partial}
\newcommand{\lcm}{\text{lcm}}
\newcommand{\Arg}{\text{Arg}}
\newcommand{\sgn}{\text{sgn}}
\renewcommand{\Re}{\text{Re}}
\renewcommand{\Im}{\text{Im}}
\renewcommand{\phi}{\varphi}
\newcommand{\eps}{\varepsilon}
\newcommand{\mc}{\mathcal}
\newcommand{\C}{\mathbb{C}}
\newcommand{\Z}{\mathbb{Z}}
\newcommand{\N}{\mathbb{N}}
\newcommand{\D}{\mathbb{D}}
\renewcommand{\H}{\mathbb{H}}
\newcommand{\R}{\mathbb{R}}
\newcommand{\fr}{\frac}
\newcommand{\DD}{\mathbb{D}}
\newcommand{\HH}{\mathbb{H}}
\newcommand{\HHH}{\mathcal{H}}
\newlength\tindent
\title[Inverse problem for $h$-functions of planar Brownian motion]{A method of solution for the inverse problem for $h$-functions of planar Brownian motion} \author{Greg Markowsky,
Clayton McDonald}
\begin{document}
\begin{abstract}
     Let $D$ be a planar domain and $z$ be a point in $D$. The harmonic measure distribution function
     $h^z_D(r)$, with base point $z$, is the harmonic measure with pole at $z$ of
     the parts of the boundary which are within a distance $r$ of $z$. Equivalently, it is the
     probability Brownian motion started from $z$ first strikes the boundary
     within a distance $r$ from $z$. This paper is concerned with
     the following inverse problem: given a suitable function $h$, does there exist a
     domain $D$ such that $h=h_D$?  To answer this, we first extend the concept of
     the $h$-function of a domain to one of a stopping time $\tau$. Then, using
     the conformal invariance of Brownian motion, we solve the inverse problem
     for the $h$-functions of stopping times. In many cases, the stopping time is the exit time of a domain $D$, our technique therefore solves the original
     inverse problem. We will illustrate our methods with a large family of examples.
\end{abstract}
\maketitle
\section{Introduction}
The harmonic measure distribution function, or ``$h$-function'', of a domain $D\subset \C$ 
with base-point $z\in D$ is the probability that planar Brownian motion $Z_t$ started at $z$
first exits $D$ within a distance $r$ from $z$. That is, if we let $\tau_D$ denote the first exit time of Brownian motion, then
$$
	h_D^z(r) := P_z(|Z_{\tau_D} - z| \leq r) = \omega_z(\overline{B_r(z)}\cap \del D).
$$
As far as we are aware, this function was first systematically studied by
Walden and Ward \cite{WW}, who credited Ken Stephensen
\cite{brannan1989research} for the original idea; we note that in those and most subsequent papers on the topic, the $h$-function was defined analytically rather than probabilistically, with an equivalent definition in terms of harmonic measure. Since that time, a large
number of papers have appeared on the topic. Brownian motion is conformally
invariant (equivalently, harmonic measure is as well), so it is evident that a
major tool for attacking this problem is conformal mappings. Many papers have
utilized this approach, such as
\cite{walden2001asymptotic,snipes2005realizing,mahenthiram24new,barton2014new,barton2003conditions,
brannan1989research,mahenthiram2024harmonic,mahenthiram2025computing,snipes2008convergence,
mahenthiram2023computing,green2024towards,
betsakos1998harmonic, betsakos2003distribution}.\\

To give a concrete example, in the upper-half plane $\H:=\{z\in\C:\Im{z}>0\}$, the probability density 
$P_i(Z_{\tau_D}\in ds)$ is well known to be the Poisson kernel on $\H$ (See \cite{greg,chin2020note} for
derivations of this formula). Then for $r\geq 1$ we have
$$
	h_\H^i(r) = \omega_i(B_r(i)\cap\R) = \fr{2}{\pi}\int_0^{\sqrt{r^2-1}} \fr{1}{t^2+1}dt = 
	\fr{2}{\pi}\arctan\sqrt{r^2-1},
$$
and $h_\H^i(r) = 0$ for $r<1$. A diagram of this $h$-function and
its derivation is given below.
\begin{figure}[H]
        \includegraphics[width=150pt, height=100pt]{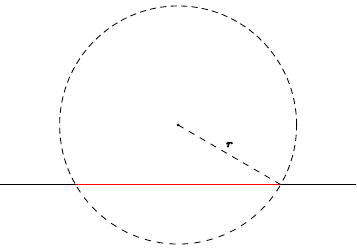}
\hspace{10mm}
        \includegraphics[width=120pt, height=85pt]{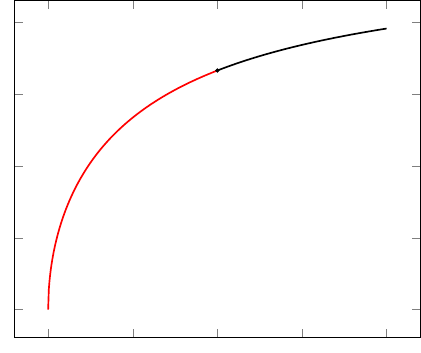}
        \caption{ $h$-function of $\H$ with base point $i$}\label{fig1}
\end{figure}

It is natural to ask which properties of a domain $D$ are reflected in properties of $h_D^z$, and several things are known. For example, it is shown in \cite{MALWsurv} by Buerling's projection theorem that
the $h$-function of a simply-connected domain $D$ must satisfy
$$
	h_D^z(r) \leq 1-\fr{4}{\pi}\arctan\sqrt{\fr{d}{r}},
$$
where $d$ is the minimum distance to the boundary from $z$. Other geometric
aspects of the $h$-function can also be found in \cite{MALWsurv}. Another natural question is uniqueness: it is clear that the domain associated to a given $h$-function is, in general, not unique, because Brownian motion is rotationally
invariant, which means that domains which are rotations of each other around the basepoint will have the same $h$-functions. There are other examples as well which are not rotationally equivalent, and we will return to this point later in the paper. \\ 

This paper focuses on the inverse problem. That is, given a suitable function $h$, does 
there exist a domain $D$ such that $h=h_D^z$ for some base point $z$? This problem has been studied in papers such as \cite{snipes2005realizing, barton2014new}. There are some trivial 
necessary conditions that $h$ must satisfy. To begin with, $h$ must be right continuous
and non-decreasing, with $h(r)\to1$ as $r\to\infty$. Furthermore, for some $d>0$ it must hold that $h\equiv0$ on $(0, d)$,
where $d$ is the minimum distance from $z$ to the boundary. A proposed technique to tackle
this inverse problem, found in \cite{MALWsurv}, is to approximate the $h$-function by step
functions, and consider domains of concentric circular arcs.
\begin{figure}[H]
        \includegraphics[width=120pt, height=120pt]{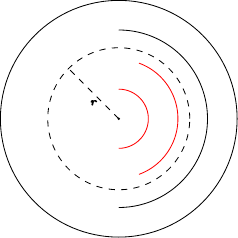}
\hspace{10mm}
        \includegraphics[width=120pt, height=120pt]{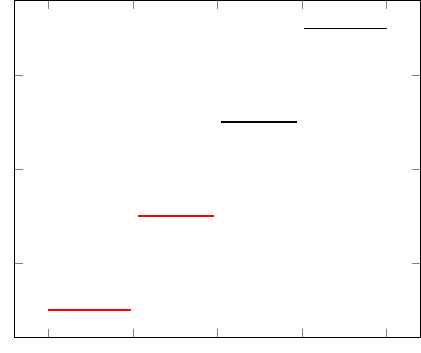}
        \caption{Step function $h$-function and the associated circle domain}\label{fig2}
\end{figure}

The method is to use these multiply-connected domains to approximate well-behaved 
simply-connected domains. In that case, under certain conditions, the limiting domain will have the
prescribed $h$-function.  \\

Our proposed solution is substantially different to the step-function program.
The methods are primarily complex-analytic and are inspired by a technique
developed by Gross to address the conformal Skorokhod embedding problem
\cite{Gross}. This problem asks for a domain $D$ such that $Re(B_{\tau_D})$ has a prescribed distribution. Gross' method was further explored in subsequent works by other authors
\cite{mariano2020conformal, boudabra2019remarks, boudabra2020new,
boudabra2024boundedness}; most notably, the authors of
\cite{mariano2020conformal} suggested that Gross' technique could be modified
to give a solution of the $h$-function inverse problem, which is essentially
the program we have followed in this paper. To see why this is a natural idea, note that $|e^z| = e^{Re(z)}$; therefore, as a rough idea, one may view the inverse problem for $h$-functions as the conformal Skorokhod embedding problem, exponentiated. \\

To make good on this idea, we first extend the notion of the $h$-function of a domain, to an $h$-function of a stopping time. This continues a general theme running through the papers \cite{GregGreen, greg, markowsky2018remark, dottedline}, with the idea being to take a classical quantity normally associated with a domain (e.g. Green's function, harmonic measure, Hardy number) and associate it instead to a stopping time of Brownian motion. This approach is discussed in detail in \cite{dottedline}, where the class $\HHH$ of stopping times was defined. We now recall this definition. 

Given a nonconstant analytic function $f$ on a domain $D$, let

\begin{align} \label{skyfall}
    \sigma_t :=  \int_0^{\tau_D \wedge t} |f'(Z_s)|^2 ds. 
\end{align}

It is not hard to see that $\sigma_t$ is strictly increasing and continuous a.s., and the same is therefore true of $C_t := \sigma_t^{-1}$. Levy's Theorem (see \cite{bass, davis, durBM, morters2010brownian}) now states that the process $\hat Z_t = f(Z_{C_t})$ is a planar Brownian motion on the random interval $[0,\sigma_{\tau_D}]$, and $\sigma_{\tau_D}$ is a new stopping time, although it is not necessarily the exit time of a domain. 

This is precisely the type of stopping time we are interested in. We let

$$
\HHH = \{\tau: \tau = \sigma_{\tau_\HH} \text{, with $\sigma$ defined by (\ref{skyfall}) for some $f$ analytic on $\HH$}\}.
$$

Note that here $\HH$ denotes the upper half-plane $\{Im(z)>0\}$; the definition given in \cite{dottedline} made use of the unit disk $\DD$ rather than $\HH$, but since $\DD$ and $\HH$ are conformally equivalent it is clear that the two definitions are equivalent. A good example of a stopping time in $\HHH$ which is not the exit time of a domain is $\inf\{t \geq 0: |arg(Z_t)| = r\}$ for $r > \pi$; this is the projection of $\tau_\HH$ under the analytic function $z \to (-iz)^{r/\pi}$.

In many cases, for $\tau \in \HHH$ the quantity 
$P_z(|Z_{\tau} - z | \leq r)$ makes sense even when $\tau$ isn't the first exit time of a domain. We
will define this quantity as the $h$-function of $\tau$ with base-point $z$. When $\tau$ is 
the first exit time from a domain $D$, then clearly $h_D^z = h_\tau^z$. Our solution to the 
inverse problem for the $h$-function will produce a stopping time in $\HHH$ rather than a domain. To be precise, our main result is the following.

\begin{theorem}\label{theorem1}
	Let $h$ be right continuous, non-negative and increasing on $[0,\infty)$ with $h(r)\to1$
	as $r\to\infty$. Denote the generalised inverse of $h$ by $g$. If $\ln(g)\in L^p([0,1])$
	for some $p>1$, then there exists a stopping time $\tau\in\mc{H}$ such that,
	$$
		h(r) = h_\tau^0(r) = P_0(|Z_\tau| \leq r).
	$$
\end{theorem}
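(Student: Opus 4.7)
The plan is to reduce the question to the classical one-dimensional Skorokhod embedding problem via the skew-product decomposition of planar Brownian motion. Setting $\tilde{h}(x) := h(e^x)$, the statement ``$|Z_\tau|$ has CDF $h$'' is equivalent to ``$\log|Z_\tau|$ has CDF $\tilde h$''. The hypothesis $\log g \in L^p([0,1])$ for some $p>1$ is exactly the statement that a random variable with CDF $\tilde h$ has finite $p$-th absolute moment; in particular it has a finite mean $c := \int_\R x\, d\tilde h(x)$ and finite first absolute moment.

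Because $Z$ starts at the origin, where $\log|z|$ is singular, I would first wait: let $\sigma_1 := \inf\{t \geq 0 : |Z_t| = e^c\}$, which is a.s.\ finite by neighbourhood recurrence of planar Brownian motion. On $[\sigma_1, \infty)$ the process $\log|Z_t|$ is a continuous local martingale (since $\log|z|$ is harmonic on $\C \setminus \{0\}$) starting at $c$. By Dambis--Dubins--Schwarz, $\log|Z_{\sigma_1 + s}| = \beta_{A(s)}$ where $\beta$ is a one-dimensional Brownian motion with $\beta_0 = c$ and $A(s) := \int_0^s |Z_{\sigma_1 + u}|^{-2}\, du$. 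The integral $A(\infty)$ is a.s.\ infinite---otherwise $\log|Z_t|$ would converge, contradicting the a.s.\ oscillation of $|Z_t|$ between arbitrarily small and arbitrarily large values---so $A$ is a strictly increasing bijection of $[0,\infty)$.

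A one-dimensional Skorokhod embedding (Chacon--Walsh, Az\'ema--Yor, Root, \ldots) applied to $\beta$ with target $\tilde h$ now produces an a.s.-finite stopping time $\sigma$ of $\beta$ with $\beta_\sigma \sim \tilde h$; this is possible because $\tilde h$ has mean $c$ (matching $\beta_0$) and finite first absolute moment. Setting $\tau := \sigma_1 + A^{-1}(\sigma)$ and invoking the standard correspondence between stopping times under a continuous strictly increasing time change, $\tau$ is a stopping time for the natural filtration of $Z$, and by construction $\log|Z_\tau| = \beta_\sigma \sim \tilde h$, i.e.\ $|Z_\tau|$ has CDF $h$, as required.

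The main obstacles I expect are: (i) verifying rigorously that $\tau$ is a stopping time of the original filtration of $Z$, since $\sigma_1$ and the time change $A$ are both path-dependent and one must carry the filtrations cleanly across the DDS step; (ii) selecting a version of Skorokhod embedding whose stopping time remains a.s.\ finite under only the $L^p$ hypothesis with $p>1$, rather than the more customary $L^2$ assumption one gets ``for free'' in Az\'ema--Yor; and (iii) if the authors wish $\tau$ to be concretely interpretable as the ``projection of a hitting time of the real line'' anticipated in the abstract (so that the domain-theoretic half of the paper can be appended), the embedding must be arranged so that this geometric interpretation is manifest and not merely abstract.
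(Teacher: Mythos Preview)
Your approach is correct and genuinely different from the paper's. The paper does not touch one-dimensional Skorokhod embedding or the skew-product decomposition. Instead it constructs an explicit holomorphic map on the cylinder $\mc{C}=\H/(2\Z+1)$: with $g$ extended evenly and periodically, set $u=P_y\ast\log g$ and $v=P_y\ast(H\log g)$ using the periodic Poisson and Hilbert transforms (Lemmas~\ref{lemma:l1}--\ref{lemma:2}), form $F=u+iv$, and put $f(z)=\exp(F(z)+\alpha(z))$ with $\alpha(z)=i\pi z$. The $L^p$ hypothesis with $p>1$ is used exactly to make the Hilbert transform bounded, so that $v$ exists. Brownian motion on $\mc{C}$ started ``at infinity'' lands uniformly on $T$ (Lemma~\ref{lemma:3}), whence $|f(Z_{\tau_{\mc{C}}})|=g(U)$ with $U$ uniform on $T$; the desired $\tau$ is the projection of $\tau_{\mc{C}}$ through $f$, and the inverse sampling theorem finishes.

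Your route is shorter for Theorem~1 in isolation and in fact only needs $\log g\in L^1$ for the mean $c$ to exist (your obstacle~(ii) is not a real obstruction: Az\'ema--Yor, Chacon--Walsh, and Dubins all yield an a.s.\ finite $\sigma$ from a finite first moment alone; $L^2$ is only needed for $\E[\sigma]<\infty$). The cost is precisely your obstacle~(iii), and it is the whole point of the paper. Every subsequent section---deciding when $\tau$ is actually the hitting time of a planar domain, the one-parameter family indexed by $\alpha$, the numerical reconstruction of domains, and the covering-space interpretation in Corollary~2---depends on $\tau$ being the image of a hitting time under an \emph{explicit} holomorphic map whose boundary values are known. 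A stopping time pulled back from Az\'ema--Yor on the radial DDS Brownian motion carries no such geometric structure, so while your argument establishes the theorem as stated, it cannot feed into the programme the theorem is meant to initiate.
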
\
The paper is organised as follows: In Section \ref{sec2} we will define the relevant objects and
their relations. We will state and prove three lemmas required for the proof of Theorem
\ref{theorem1}. We will also review the subtle aspects of conformal invariance of Brownian motion.
In Section \ref{sec3} we will motivate and prove Theorem \ref{theorem1}. In Section 
\ref{sec4} we will 
present many examples where the solution to the inverse stopping time problem also solves the
original inverse problem. We will also cover the numerical approximation of the
function constructed in the proof of Theorem \ref{theorem1}. 

\section{Definitions and Preliminaries}\label{sec2}

In this section we collect some basic definitions and results we will need.

\subsection{Harmonic Measure}
The harmonic measure of a plane domain $D$ is a family of probability measures 
$\{\omega_a\}_{a\in \overline{D}}$ defined on the Borel sets of $\partial D$. It is given its name because for any 
fixed boundary set $E$ the map $a\mapsto\omega_a(E)$ is harmonic. There are three standard 
definitions of harmonic measure, and we will pass between these definitions freely and without comment, 
so for clarity we will recall them here. Harmonicity and conformal invariance can be considered either
a trivial consequence of the definition, a simple corollary to a computation, or a theorem, depending on the chosen
definition. What follows is essentially an extension of the introductory paragraph in \cite{makarov}.
\\
\begin{definition}[Complex Analytic Definition]
 Let $D\subset \C$ be simply connected, distinguish a point $a\in D$, and let $\phi_a:\D\to D$ be a 
conformal (holomorphic and bijective) map sending $0$ to $a$. Then harmonic measure 
``with pole at a'' is defined as the push-forward of normalised arc-length measure on $\D$ by 
$\phi_a$,
$$
	\omega_a(E) := \fr{1}{2\pi}(\varphi_a)_*m(E) = \fr{1}{2\pi} m(\varphi_a^{-1}(E)),
$$
for $E\in \mc{B}(\partial D)$, the Borel sets of the boundary.
\end{definition}
The harmonic
measure of the unit disk can be found using the conformal map $z\mapsto (z-a)/(1-\overline{a}z)$, and in fact the density of $\omega_a$ is exactly the Poisson kernel on the disk. Consequently, for all 
simply connected domains $D$ and fixed $E\in\mc{B}(\partial D)$ the map $a\mapsto \omega_a(E)$ 
is harmonic with boundary value $1_E$. 

This property follows trivially from the potential theory
definition of Harmonic measure, as well, which is simply the Riesz-Markov measure associated to the Poisson integral.
\begin{definition}[Potential Theoretic Definition]
    Let $D$ be a subset of $\C$ which has an non-empty, non-polar boundary (see \cite{ransford}). For a continuous and bounded $f:\partial D \to \R$, the
    solution to the Dirichlet problem on $D$ with boundary data $f$ exists and is denoted $P_D f$. For each $z\in D$ the Riesz Markov measure associated to the operator
    $f\mapsto P_Df(z)$ is the harmonic measure with pole at $z$, that is,
    $$
	    P_Df(z) = \int_{\partial D}f(w) d\omega_z(w).
    $$
\end{definition}
The measure exists as the operator $f\mapsto P_D f(z)$ is positive in the sense that $f\geq0$, implies $P_D f\geq0$. Furthermore, $\omega_z$ is a probability measure as $f\equiv1$ implies $P_D f \equiv 1$.
This definition places no requirements on the
connectivity of a domain, and agrees with the previous as 
composition with a holomorphic function preserves harmonicity. Because the boundary of 
every simply-connected strict subset of $\C$ is non-polar, this definition is an extension of the 
previous. 
This naturally leads to the probabilistic
definition of harmonic measure.  As shown initially by Kakutani \cite{Kakutani}, the unique bounded solution to the
Dirichlet 
problem can be expressed as $E_zf(Z_{\tau_D})$, where as usual $\tau_D$ represents the first 
exit from $D$, and $Z$ is Brownian motion started at $z\in D$. For the choice of $f=1_E$, combining
Kakutani's result with the potential theory definition of harmonic measure we find the following.
\begin{definition}[Probabilistic Definition]
    Let $D$ be a subset of $\C$ with $P^a(\tau_D <\infty) = 1$ for all $a\in D$ where $\tau_D$ is the first exit from $D$ from Brownian motion $Z$ started at $a\in D$. 
    Then harmonic measure with pole at $z$ is given by
        $$
	    \omega_z(E) = P_z(Z_{\tau_D}\in E),
        $$
for $E\in \mc{B}(\partial D)$.
\end{definition}
This definition is equivalent to the previous two by Levy's conformal invariance of Brownian motion,
and the strong Markov property. The complex analytic definition of harmonic measure extends to the 
cases above if we instead push-forward by the universal covering map.

\subsection{Periodic Integral Transforms}
The holomorphic function mentioned in the introduction will be constructed by an integral transform
on $\R$. For reasons that will be apparent in the next section, we must consider
transforms applied to periodic functions. Consequently, we will spend this section studying 
the form of these operators on $\R/ \Gamma$, where $\Gamma = 2\Z+1=\{2n+1:n\in\Z\}$. This will be vital when numerically
approximating the function, while also making some arguments simpler.\\

Define $T=\R / \Gamma$ and equip it with the quotient topology. Let $L^p(T)$ denote the measurable functions on $\R$ which are periodic 
with respect to $\Gamma$ and also satisfy $\int_{-1}^1 |f|^p <\infty$. These functions are in one-to-one correspondence with $L^p$ integrable
functions on $T$. If $\mc{T}$ is an integral operator defined on $L^p(\R)$ which preserves the periodicity with respect to $\Gamma$, then 
the domain of definition can be extended to $L^p(T)$ in the following manner: Let $K$ be the associated kernel, and let $f\in L^P(T)$,
$$
    \mc{T}f = \int_\R K(t,x)f(t)dt = \sum_{k=-\infty}^\infty \int _{-1}^1 K(t+2k, x) f(t) dt = \int_{-1}^1 \tilde K(t,x) f(t)dt.
$$
Here $\tilde K(x,t) = \sum_{k=-\infty}^\infty K(x+2k, t)$ is the periodic form of $K$, assuming this exists, and the final expression on the right is taken to be the definition of $\mc{T}$ on $L^p(T)$.

For example, the form obtained for the Poisson kernel (which we will require later) is contained in the following lemma.

\begin{lemma}\label{lemma:1}
The periodic Poisson integral takes the form
    \begin{align*}
        (P_y\ast f)(x) = \frac{1}{2}\int_T \fr{\sinh\pi y}{\cosh\pi y - \cos\pi(x-t)} f(t) dt.
    \end{align*}
	for $(x, y) \in T\times \R^+$. In the limit we have,
    \begin{align*}
        \lim_{y\to\infty} (P_y\ast f) = \frac{1}{2}\int_T f(t)dt.
    \end{align*}
    This exists for $f\in L^p(T)$, and furthermore the Poisson integral is bounded on $L^p(T)$ and $P_y\ast f$ is harmonic on 
    $T\times \R^+$ with $\lim_{y\to 0} (P_y\ast f) = f$ almost everywhere. 
\end{lemma}
\begin{proof}
Let $f\in L^p(T)$.
    \begin{align*}
        (P_y \ast f)(x) &= \frac{1}{\pi} \int_{\R} \frac{y}{(x-t)^2 + y^2}  f(t) dt\\
                   &= \frac{1}{\pi} \int_{-1}^1 \sum_{k\in\Z} \frac{y}{(x-t-2k)^2 + y^2}  f(t) dt.
    \end{align*}
    A well-known application of the residue theorem yields the following formula for evaluating sums of this type,
    $$
        \sum_{k=-\infty}^\infty f(k) = \sum_{z_j}\mathrm{Res}(\pi \cot(\pi z) f(z);z_j),
    $$
    where the summation on the right ranges over the poles of $g(z)=\pi \cot(\pi z) f(z)$ (see \cite{hoffman1999basic}).
    In the case of the Poisson kernel, there are two simple poles of $g$, 
    namely $z_{\pm}=\fr{1}{2}(\pm yi + t - x)$, extracting the residue at each, we find,
    $$
     \sum_{k\in\Z} \frac{y}{(x-t-2k)^2 + y^2} = i \pi 
	    \left(\cot\frac{\pi}{2}(x-t + i y) - \cot\frac{\pi}{2}(x-t - i y)\right).
    $$
    The above simplifies to the desired form using the complex definition of $\sin$ 
    and $\cos$, and this shows that $P_y\ast f$ exists. 
    The limit as $y\to\infty$ follows from the dominated convergence theorem, 
    as is boundedness through H\"{o}lder's inequality with the periodic kernel.
    Harmonicity and the limit as $y\to 0$ is carried over by the standard Poisson integral.
\end{proof}
The principle motivation for this lemma is that the Poisson integral 
in this form is much  easier to compute numerically than the standard form 
on $L_\Gamma^p(\R)$. 
An obvious difference in the Poisson integral on $L^p(\R)$ and $L^p(T)$ is the behaviour at $\infty$,
 and becomes more clear when the transform is put into the form given in the lemma.
 
 %

We also require the periodic Hilbert transform.
 
\begin{lemma}\label{lemma:2}
    The periodic Hilbert transform takes the form
    \begin{align*}
        (Hf)(x) = \frac{1}{2} \mathrm{p.v.}\int_T f(t) \cot \frac{\pi}{2} (x-t)dt.
    \end{align*}
    
    This exists for $f\in L^p(T)$, and furthermore $H$ is bounded on $L^p(T)$, for $1<p<\infty$, and commutes with the Poisson integral. 
\end{lemma}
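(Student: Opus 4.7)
The plan is to mirror the proof of Lemma~\ref{lemma:l1}: start from the standard Hilbert transform on $\R$ and periodize its kernel by summing over the lattice $\Gamma$. For $f\in L^p(T)$, identified with an element of $L^p_\Gamma(\R)$, I would write
\begin{align*}
(Hf)(x) \;=\; \frac{1}{\pi}\,\mathrm{p.v.}\!\int_\R \frac{f(t)}{x-t}\,dt \;=\; \frac{1}{\pi}\,\mathrm{p.v.}\!\int_T f(t)\Bigl[\sum_{k\in\Z}\frac{1}{x-t-2k}\Bigr]dt,
\end{align*}
with the inner sum interpreted as a symmetric partial sum. By the classical cotangent partial-fraction expansion one has $\sum_{k\in\Z}\tfrac{1}{x-t-2k}=\tfrac{\pi}{2}\cot\tfrac{\pi}{2}(x-t)$, and after the $1/\pi$ factor this yields the stated kernel.

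The delicate step, and the main obstacle, is justifying the exchange of principal value, sum, and integral, since the series $\sum_k \tfrac{1}{x-t-2k}$ converges only conditionally. I would handle this by simultaneous symmetric truncation: cut the $\R$-integral to $|x-t|\le 2N+1$ and the sum to $|k|\le N$ so the two cutoffs line up with the fundamental-domain splitting, then let $N\to\infty$. The resulting finite sums are uniformly bounded on compacta away from the lattice and converge to $\tfrac{\pi}{2}\cot\tfrac{\pi}{2}(x-t)$; the $L^p$-hypothesis on $f$ together with H\"older's inequality controls the contribution near the diagonal singularity, which is no worse than that of the Hilbert transform on $T$ itself.

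For $L^p$-boundedness with $1<p<\infty$, I would invoke the classical theorem of M.~Riesz: the conjugate-function operator on the circle is bounded on $L^p$, and up to normalisation it coincides with the operator described above. As an alternative route, one can transfer the $L^p(\R)$-boundedness of the ordinary Hilbert transform to the periodic setting by a de~Leeuw--type transference argument applied to the kernel in the display above.

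For commutation with the Poisson integral, the cleanest argument is Fourier-analytic: both $P_y\ast$ and $H$ are translation-invariant on $T$, hence Fourier multipliers against $\{e^{in\pi x}\}_{n\in\Z}$, acting by $e^{-|n|\pi y}$ and $-i\,\mathrm{sgn}(n)$ respectively, and these scalar multipliers commute. The commutation on trigonometric polynomials then extends to all of $L^p(T)$ by the boundedness just established. Conceptually the statement is that $P_y\ast f + i\,P_y\ast Hf$ is the holomorphic extension of $f$ to the half-cylinder $T\times\R_+$, so reading off real and imaginary parts makes the commutation manifest.
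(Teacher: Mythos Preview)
Your proposal is correct in outline and shares the paper's periodization strategy, but the technical execution differs. For existence, the paper does not attempt to justify the full interchange of principal value, sum, and integral directly. Instead it separates the singular $k=0$ term from the rest: the contributions from $k\neq 0$ assemble into $\int_{-1}^{1} f(x-t)\bigl(\tfrac{\pi}{2}\cot\tfrac{\pi t}{2}-\tfrac{1}{t}\bigr)\,dt$, an ordinary integral with bounded kernel, while the $k=0$ term is recognised as the $\R$-Hilbert transform of the truncation $f_1=\mathbf{1}_{[-2,2]}f\in L^p(\R)$, whose a.e.\ existence is classical on the line. Your symmetric-truncation scheme is morally equivalent once one isolates the $k=0$ piece, but the appeal to ``H\"older's inequality'' near the diagonal is the weak point: H\"older alone does not yield existence of the principal value, and saying the singularity is ``no worse than that of the Hilbert transform on $T$ itself'' is close to assuming what is being proved. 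The $f_1$ device is exactly what closes this gap.

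For boundedness the routes genuinely diverge: the paper reuses the same splitting, bounding the $k=0$ piece by the $L^p(\R)$ estimate applied to $f_1$ and the remainder by the triangle inequality, so the periodic bound is \emph{derived} from the line bound. Your invocation of M.~Riesz on the circle (or de~Leeuw transference) is shorter but less self-contained, since M.~Riesz's theorem is precisely the $L^p$-boundedness of the conjugate function. Both are legitimate; the paper's approach buys a reduction to the $\R$-theory, yours buys brevity. Your commutation argument via Fourier multipliers is correct and is just a concrete form of the paper's one-line remark that translation-invariant operators commute with convolutions.
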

\begin{proof}
 First we prove existence. Let $f\in L^p(T)$. Then
 
    \begin{align*}
        (H f)(x) = \frac{1}{\pi} \mathrm{p.v.} \int_\R \frac{f(t)}{x-t} dt =& \frac{1}{\pi} 
	    \lim_{\eps\to 0}\int_{|t| > \eps} \frac{f(x-t)}{t}dt \\                                         
	    =& \frac{1}{\pi} \lim_{\eps\to 0} \int_{\eps \leq |t| \leq 1} \frac{f(x-t)}{t}dt +
	\frac{1}{\pi}\int_{-1}^1 \sum_{\substack{k\in\Z\\k\neq0}}\frac{f(x-t)}{t-2k}dt 
    \end{align*}
    This follows by the periodicity of $f$. The Hilbert transform of an
    $L^p(\R)$ function exists almost everywhere. Let $f_1 :=
    \mathbf{1}_{[-2, 2]} f$, so $f_1\in L^p(\R)$ and on $(-2, 2)$ we have
    $f_1 = f$. That is, the first integral in the last equation is
    nothing but the Hilbert transform of an $L^p(\R)$ function, hence it exists
    for almost every $x$. 
    Again a routine application of the residue theorem yields the following equality.
    \begin{align*}
        \sum_{\substack{k\in\Z\\k\neq0}} \frac{1}{t-2k} = \frac{\pi}{2}
	    \cot \frac{\pi}{2}t - \frac{1}{t}
    \end{align*}
    This is bounded on $(-1, 1)$, and so the second integral term is finite for
    almost all $x$. This proves existence, the form trivially follows by
    replacing $\int_{-1}^1 f(x-t) \left(\frac{\pi}{2}\cot \frac{\pi}{2}t
    - \frac{1}{t}\right)dt$ with 
    $\lim_{\eps\to 0} \int_{\eps \leq |t| \leq 1}  f(x-t) 
	\left(\frac{\pi}{2}\cot \frac{\pi}{2}t - \frac{1}{t}\right)dt$ and simplifying.\\
    
    Now, to prove boundedness on $L^p(T)$ for $1 < p < \infty$. Let $H_\eps f :=  
	\frac{1}{\pi} \int_{|t| > \eps} \frac{ f(x-t)}{t}dt$, then
    \begin{align*}
        \Vert H_\eps  f\Vert_p &\leq \bigg\Vert \frac{1}{\pi} \int_{\eps \leq |t| \leq 1} 
	    \frac{ f(x-t)}{t}dt \bigg \Vert_p + 
        \bigg\Vert \frac{1}{\pi}\int_{-1}^1 \sum_{\substack{k\in\Z\\k\neq0}}
	    \frac{ f(x-t)}{t-2k}dt\bigg\Vert_p  \\ &\leq A\Vert f_1\Vert_p + B \Vert  
	    f\Vert_p \leq C\Vert f\Vert_p 
    \end{align*}
    The first bound follows as the Hilbert transform is bounded on $\R$ and $f_1\in L^p(\R)$ agrees 
	with $f$ on $(-1, 1)$. The second term follows by repeated application of the triangle 
	inequality. Fatou's lemma with $\eps \to 0$ gives boundedness on $L^p$ 
	for $1 < p < \infty$. \\

    Finally, as the Hilbert transform is translationally invariant it must commute with 
	convolution operators, so for $p\in(1, \infty)$ $P_y\ast Hf = H (P_y \ast f)$.
\end{proof}
    In the case where $p=\infty$, since $T$ is compact it still makes sense to
    apply the Hilbert transform to the Poisson integral. In the $\R$ case the
    Hilbert transform is bounded between $L^\infty(\R)$ and $\mathrm{BMO}$, and
    the Poisson integral of a $\mathrm{BMO}$ function remains $\mathrm{BMO}$,
    so here the iteration also is well-defined \cite{stein}. For proof of the boundedness of the Hilbert transform on $\R$ see
    \cite{GFT}. 
    
\subsection{Conformal Invariance of Brownian Motion}

If $f$ is a holomorphic function on $U\subset \C$, then an application of It\^o's lemma 
and Levy's characterisation of Brownian motion shows that $f(Z_t)$ is a time-changed planar Brownian defined on the
random interval $[0, \tau_U)$, where the time change given by equation (\ref{skyfall}). 
This result is typically referred to as ``Levy's Conformal Invariance of Brownian motion''. 
We stress, however, that injectivity is not required for this result, nor do we require the derivative of $f$ to never vanish,
meaning Brownian motion is invariant under functions which are not conformal in the two most common definitions of the word. 
We will simply call this 
``holomorphic invariance of Brownian motion''. The associated stopping time $\mc{H}$ we will denote 
$f(\tau_U)$, and will refer to $f(\tau_U)$ as the {\it projection of $\tau_U$ under $f$}.

For an example, take $D := \{z: 0 < \Re z < \ln r\}$ and $\tilde D := \{z : 1 < |z| < r\}$ with 
$f(z) = e^z$, which takes $D$ onto $\tilde D$. Here, $f$ is a (holomorphic) covering map, and it is not hard to see that 
$f(\tau_D) = \tau_{\tilde D}$ (this also follows from Proposition \ref{covering} below). 
For a more delicate example, let $S := \C\setminus(-\infty, -1]$ and $f(z) = z^2$. If we start Brownian motion 
on $[1, \infty)$, then $\tau_S$ projects under $f$ to the first time the Brownian motion hits the positive real line after it winds
about $0$ once . Therefore, in this example $f(\tau_S) \neq \tau_{f(S)}$. 

We do not want to restrict to the maps which continuously extend to the boundary, as this is far too restrictive. 
Therefore, by $f(\partial U)$ we mean the set $\{w = \lim_{z_n\to z} f(z_n):
\{z_n\}_{n\in\N}\subset U \text{ with }z_n\to z\in \partial U \}$.
For it to hold that $f(\tau_U)=\tau_{f(U)}$ we require $f(U) \cap f(\partial U) = \emptyset$; this holds for instance if $f$ is a covering
map.

\begin{proposition}\label{covering}
	If $f:U\to f(U)$ is a covering map, the projection $f(\tau_U)$ is the first exit time of Brownian motion in 
	$f(U)$.
\end{proposition}
\begin{proof}
	Suppose $w \in f(U) \cap f(\partial U)$ where $f$ is a covering map. 
	Since $w\in f(U)$ we can choose compact subset $K$ with 
	$w\in K\subset f(U)$. As $w \in f(\partial U)$, there exists some $z\in\partial U$  and a sequence 
	$\{z_n\}\subset U$ with $z_n\to z$ and $f(z_n) \to f(z)$. By taking sub-sequences if necessary, we assume
	for some large $n_0$,
	we have $f(z_n) \in K$ for all $n\geq n_0$. As $f$ is a covering map, it holds that
	$f^{-1}(K) = \bigcup_{j\in \N} K_j$, where each $K_j$ 
	is compact. It follows from the definition of a covering map that $z_n$ is 
	contained in some $K_j$ for $n\geq n_0$, which contradicts $z_n \to z$.
	Thus $f(U) \cap f(\partial U) = \emptyset$, and the statement follows.
\end{proof}
The example above with $f(z) = z^2$ on $S$ is not a covering map, and
moreover $f((-\infty, 1]) = f([1,\infty)) =[1,\infty)$. This explains the winding behaviour of the projection.

Define the cylinder $\mc{C} := \H / (2\Z+1)$. There exists a natural projection $\pi:\H \to \mc{C}$ and an obvious 
complex structure, constructed identically to that of the torus \cite{Otto}. Functions which are periodic with respect to $(2\Z+1)$
are in one to one correspondence with functions on $\mc{C}$. The Poisson integral of a function of $f\in L^p(T)$ can therefore be 
considered as a harmonic function on $\mc{C}$. Define Brownian motion
on $\mc{C}$ to be the projection of Brownian motion on $\H$ under $\pi$.
\begin{lemma}\label{lemma:3}
	Let $Z$ be Brownian motion on $\mc{C}$ started at a point $x+iy$. The distribution of $Z_{\tau_{\mc{C}}}$
	converges to a uniform distribution on $T$ as $y\to\infty$.
\end{lemma}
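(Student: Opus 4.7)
The plan is to reduce the hitting distribution of $\mc C$ to a direct computation using Lemma \ref{lemma:l1}. Since Brownian motion on $\mc C$ in the conformal class of the Euclidean metric is the projection of Euclidean Brownian motion on the universal cover $\H$ (up to a time-change), I begin by lifting the starting point $z$ to a representative $\zeta = x+iy\in\H$ and letting $\tilde Z$ denote Euclidean Brownian motion on $\H$ from $\zeta$. Because hitting distributions on the boundary are insensitive to the time-change,
\[
P_z(Z_{\tau_{\mc C}}\in E) = P_\zeta\bigl(\pi(\tilde Z_{\tau_\H})\in E\bigr)
\]
for every Borel $E\subset T$, where $\pi:\H\to\mc C$ is the quotient map.

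The law of $\tilde Z_{\tau_\H}$ on $\R$ has density given by the standard Poisson kernel $\frac{1}{\pi}\cdot\frac{y}{(x-t)^2+y^2}$. Periodizing under $\pi$ and using the residue computation from the proof of Lemma \ref{lemma:l1}, the density of $\pi(\tilde Z_{\tau_\H})$ on $T$ is
\[
\frac{1}{\pi}\sum_{k\in\Z}\frac{y}{(x-t-2k)^2+y^2}\;=\;\frac{1}{2}\cdot\frac{\sinh\pi y}{\cosh\pi y - \cos\pi(x-t)},
\]
which is exactly the periodic Poisson kernel for $\mc C$. Applying the limit clause of Lemma \ref{lemma:l1} with $f=\mathbf 1_E$ then gives
\[
\lim_{y\to\infty} P_z(Z_{\tau_{\mc C}}\in E) = \frac{1}{2}\int_T \mathbf 1_E(t)\,dt,
\]
which is the uniform probability measure on $T$ (of total length $2$). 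In fact, since $\cosh\pi y$ dominates $\cos\pi(x-t)$, the density converges to the constant $\tfrac12$ uniformly in $t\in T$, giving convergence in total variation — stronger than weak convergence.

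The only non-routine step is the transfer from the hyperbolic Brownian motion defining $\BM(\mc C)$ to Euclidean Brownian motion on the cover for the purpose of locating $Z_{\tau_{\mc C}}$, but this is immediate from the preliminaries: within a conformal class, Brownian paths (and therefore their exit points) are identical up to a time reparametrization. So there is no real obstacle; once this identification is made, the lemma is essentially a corollary of Lemma \ref{lemma:l1}.
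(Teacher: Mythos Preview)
Your proof is correct and follows essentially the same route as the paper: identify the exit density on $T$ with the periodic Poisson kernel of Lemma~\ref{lemma:l1} and then invoke that lemma's limit clause. The paper does this in one line by asserting that the density of $Z_{\tau_{\mc C}}$ \emph{is} the Poisson kernel, whereas you supply the justification by lifting to $\H$, computing the Euclidean exit density, and periodizing under $\pi$; your added remark on uniform (hence total-variation) convergence is a small bonus not stated in the paper.
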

\begin{proof}
	The statement is clearly true, and a direct proof is simple: The density of $Z_{\tau_{\mc{C}}}$ is the Poisson kernel, and by
	Lemma \ref{lemma:1} as $y\to\infty$ the density tends to $1/2$.
\end{proof}

To put this result another way, for Brownian motion $B$ in the upper half-plane
starting at the point $yi$, the distribution of the random variable
$B_{\tau_{\HH}} (\mbox{mod } 2)$ converges to a uniform distribution as $y \to
\infty$.

\section{Proof of Theorem 1}\label{sec3}
The chain of ideas which lead to the proof are the following: For a suitable measure $\mu$ the conformal Skorokhod embedding problem
determines a domain $D$ such that $\Re Z_{\tau_D} \sim \mu$ \cite{Gross}. The inverse problem for the $h$-function is roughly the 
exponential of this, in the sense that $|\exp( Z_{\tau_\H})|=\exp(\Re(Z_{\tau_\H}))$. With Lemma \ref{lemma:3} in mind, if there exists a 
holomorphic function on $\H$ periodic with respect to $2\Z+1$, such that $\exp f(\infty) = 0$, and $\Re f = \ln h^{-1}$, then 
$\lim_{y\to\infty}|\exp(f(Z_{\tau_\H}))| = h^{-1}(U)$, where $U$ is uniform on the interval on a period of $f$ and $Z$ is started at $x+iy$. 
Thus, the projection of $\tau_\H$ under $\exp f$ is
the solution to the inverse problem for the $h$-function of a  stopping time. 
If $\tau$ corresponds to the hitting of a domain $D$, then $D$ is a solution to the original inverse problem.
\begin{proof}[Proof of Theorem 1]
	Suppose $h$ is right-continuous with $h(r)\to1$ when $r\to\infty$, let $g$ be an even extension of the generalised inverse of $h$, as 
	considered a function on $\R$ periodic with respect to $2\Z + 1$. We assume $\ln g \in L^p(T)$ for some $p>1$, so, by Lemma \ref{lemma:1},
	$u := P_y \ast \ln g$ is harmonic on $\mc{C}$, and by Lemma \ref{lemma:2} has harmonic conjugate $v:= P_y \ast (H \ln  g)$. Let $F$ be
	the corresponding holomorphic function. Suppose $k$ is a holomorphic function on $\mc{C}$, such that for all $x$, 
	$\lim_{y\to\infty}|\exp k(x+iy)| = 0$ and $|\exp k(x)| = 1$. Define the holomorphic function $f$ by,
	$$
		f(z) = \exp (F(z) + k(z)).
	$$
	This function satisfies all properties mentioned in the previous discussion. Let $Z_t$ be Brownian motion started at a point 
	$x+iy\in\mc{C}$; then $f(Z_t)$ is a time-changed planar Brownian motion started at $f(x+iy)$, stopped at $\tau= f(\tau_\C)\in\mc{H}$. 
	By Lemma \ref{lemma:3} the random variable $f(Z_{\tau_\mc{C}})$ converges to a uniform random variable on $\R/(2\Z+1)$ as $y\to\infty$.
	In the image $f(Z_t)$ converges to a planar Brownian motion started at the origin. By construction of $f$, we have:
	$$
		|\tilde Z_\tau| = \lim_{y\to\infty} |f(Z_{\tau_\mc{C}})| = \lim_{y\to\infty} \exp \ln g(Z_{\tau_{\mc{C}}}) = g(U).
	$$
	By the inverse sampling theorem, $| \tilde Z_\tau |$ is distributed as $h$: 
	$$
		P_0(|\tilde Z_\tau| \leq r) = h(r).
	$$
	That is, $h$ is the $h$-function of the stopping time $\tau$.
\end{proof}
Of course, $k(z) = i\pi z$ satisfies the conditions required, and we can also allow $k(z) = \alpha \pi i z$ by changing the definition
of $\mc{C}$ to be a quotient with a lattice which $F$ and $k$ are jointly periodic with respect to. In fact $k$ could be incommensurable
with $F$, as $|f(x)|$ will always have a period of $2$ on $\R$. The proof is identical except exchanging $\mc{C}$ with $\H$. 

It would seem that the choice of $k$ in the proof of this result would afford a
large amount of freedom, however we will show that $k(z) = \alpha \pi i z$ is
the only choice which satisfies the required conditions. In particular, we have
the following result.
\begin{proposition}
	Let $k$ be a holomorphic function on $\{z:\Re(z)<0\}$, such that $|k(iy)|=1$, $k(z+2\pi i) = k(z)$ and $\lim_{x\to-\infty} k(x+iy) = 0$. Then $k(z) =c\exp(\alpha z)$ for some integer $\alpha$ and constant $c$ with $|c|=1$. 

\end{proposition}
\begin{proof}
      Since $k$ maps the imaginary axis to
    the circle, by Schwarz reflection $k$ extends to an entire function. Define
    $f(z) =  k(\log(z))$, where we initially choose the branch of $\log$ which is holomorphic on
    $\C\setminus[0,\infty)$, with $Im(\log(z)) \in (0, 2 \pi)$. By the periodicity of $k$, the function $f$ extends to be
    continuous on $(0, \infty)$. A standard exercise in complex analysis shows
    that the path integral of $f$ must be zero over any triangle in the
    punctured plane $\C\setminus\{0\}$, and therefore by Morera's theorem $f$
    is holomorphic on $\C\setminus\{0\}$. At the origin $\log(z)$ tends to
    $-\infty$, and since $k(x+yi)$ is bounded as $x\to-\infty$, $f(z)$ is
    bounded about the origin, and hence extends to an entire function by the Casorati–Weierstrass Theorem.
    $\log(z)$ maps the circle to the imaginary axis, and since $k$ maps
    the imaginary axis to the circle, $f(z)$ maps the unit circle to itself. In
    other words, $|f(z)| = 1$ whenever $|z|=1$. The function $z \to
    1/\overline{z}$ fixes points on the unit circle, and thus the function
    $1/\overline{f(1/\overline{z})}$ is a meromorphic function which agrees
    with $f$ on the unit circle. By the identity principle, we have $f(z) =
    1/\overline{f(1/\overline{z})}$, and this implies that $f$ must be non-zero
    at all points except possibly the origin. Suppose $f$ has a zero of order
    $\alpha$ at $0$. Then $f(z)/z^\alpha$ is non-zero in $\D$. By the maximum
    and minimum
    modulus principle, the maximum and minimum must occur on the boundary. But
    $|f|$ is constant here, so they must agree and be equal to one, and thus $f(z)/z^\alpha$ is a constant $c$ with $|c|=1$. We therefore have $f(z) =
    cz^\alpha$, and we conclude $k(z) = c\exp(\alpha z)$.
\end{proof}

\begin{remark}\label{corr1}
    If the stopping time of Theorem \ref{theorem1} is the hitting time of a domain $D$, then $D$ is symmetric with respect to $\R$. This holds as 
    the only choice of $k$ is an odd function, and $u(x,0)$, $v(x,0)$ are even and odd, respectively.
\end{remark}

More generally, if we define $\partial \tau = F(\R)$, i.e. the set on which the projected Brownian motion terminates, then $\partial \tau$ is symmetric with respect to $\R$.

\section{Investigation of the Stopping Time}\label{sec4}
\subsection{Examples}
Let us start with a trivial example. Let $h$ be the $h$-function of the unit
disk with base-point $0$. Then the generalised inverse of $h$ is identically
$1$. The associated holomorphic function is $F(z) = \exp(\pi i z)$, and indeed
$F(\H) = \D$. For a more non-trivial example, recall the $h$-function for the
upper-half plane with base-point $i$. The inverse of this function is $g(x) =
\sec(\pi x/2 )$, and numerically we find 
the Hilbert transform of $\ln g$ to be
$$
    (H \ln g)(s) = \fr{1}{2} \text{p.v.} \int_{-1}^1 \ln\left(\sec\fr{\pi}{2} t\right) \cot\left(\fr{\pi}{2}(t-s)\right) dt = -\fr{\pi}{2} s.
$$
Let $F_\alpha$ be the holomorphic function on $\H$ that was constructed in the previous section with $g(x) = \sec(\pi x/2)$, and $k(z) = \alpha i\pi z$.
For the choice of $\alpha = 1$ we find the behaviour seen in Figure \ref{F1H}.

    \begin{figure}[H]
        \includegraphics[width=120pt, height=120pt]{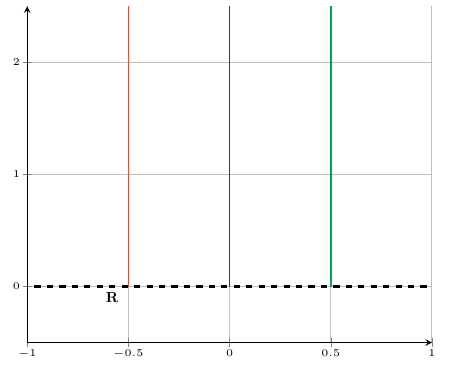}
        \includegraphics[width=120pt, height=120pt]{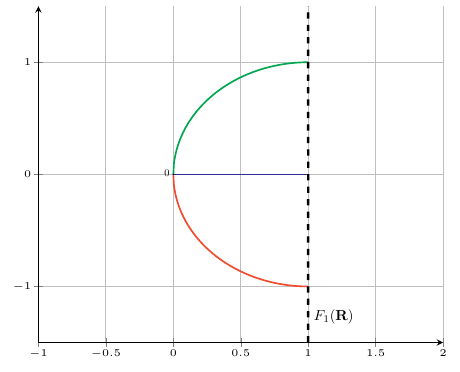}
        \caption{Domain generated from the $h$-function of $\H$ with $\alpha =1$\label{F1H}}
    \end{figure}
The left plot is $\H$ with vertical lines $x + it$ for $t\in(0,\infty)$, which
are included so we can keep track of the behaviour of $F_1$ in the image. We
see that the stopping time projects to the hitting time of a domain $D$. The
domain recovered is the simplest equivalent domain to $\H$ which is symmetric
with respect to $\R$ having $0$ being a distance of $1$ from the boundary. If
we instead choose $\alpha = 1/2$, then $F_{1/2}$ has fundamental domain $(-2,
2)$. Since $g$ has a singularity at every odd integer, increasing the
fundamental domain has the effect of adding a boundary component. If we recover
a domain from $F_{1/2}$ then it will have two boundary components. It is known
that $\C\setminus((-\infty, -1] \cup [1, \infty))$ with base point $0$ shares
the same $h$-function as $\H$ with base point $i$. This domain-base point pair
is then a good candidate, and in fact this is the behaviour we recover from
$F_{1/2}$.
    \begin{figure}[H]
        \includegraphics[width=120pt, height=120pt]{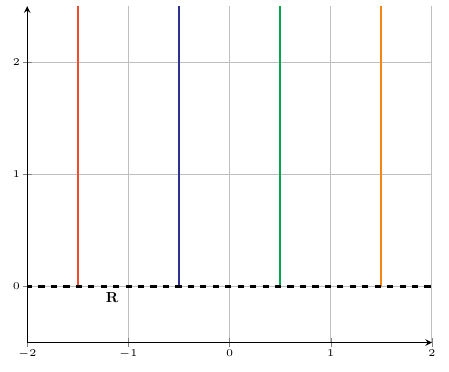}
        \includegraphics[width=120pt, height=120pt]{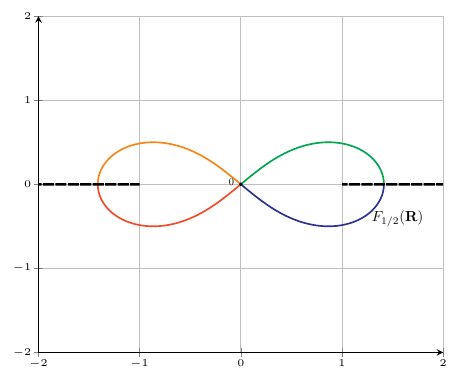}
        \caption{Domain generated from the $h$-function of $\H$ with $\alpha =1/2$}\label{F05H}
    \end{figure}
It is clear that the choice of $\alpha$ plays a role in determining the recovered domain. Suppose $F_\alpha$ $2k$ is periodic and injective on $(-k, k)\times \R^+$. 
Then $F_\alpha$ is a covering map defined on $\H$, and by Proposition \ref{covering} the associated stopping time 
is the hitting time of a domain $D$. This is the behaviour we numerically recover in the choices of $\alpha$ above.
It is often best to choose $\alpha = 1$, which minimises winding and the size of the fundamental domain. If $F_\alpha$ isn't a covering map
for this choice, then changing $\alpha$ is unlikely to remedy this. We will now investigate how $F_\alpha$ fails to be a covering map for different choices of $\alpha$.
    \begin{figure}[H]
        \includegraphics[width=120pt, height=120pt]{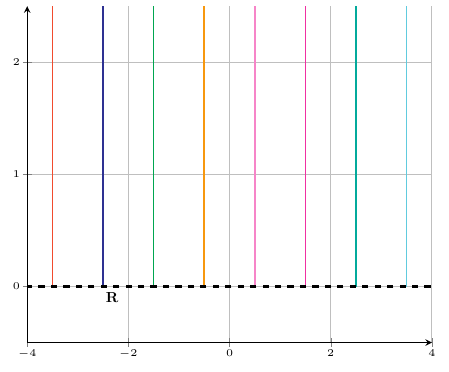}
        \includegraphics[width=120pt, height=120pt]{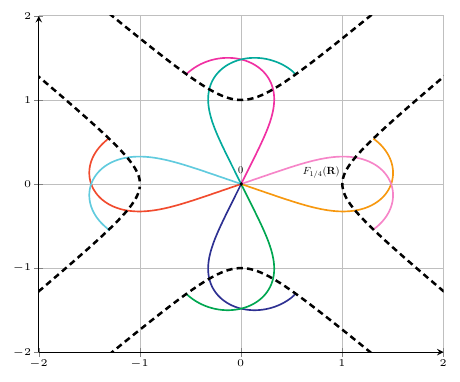}
        \caption{Domain generated from the $h$-function of $\H$ with $\alpha =1/4$}\label{F014}
    \end{figure}
Given that the image of the interior lines pass through the image of $\R$ under $F_{1/4}$, it is clear that the projection of $\tau_{\H}$
is not the hitting time of a domain $D$. While knowing the behaviour on the interior makes this clear, we can come to the same conclusion 
by the following argument. If $D$ be the component containing $0$ of $\C \setminus F_{1/4}(\R)$, then it holds that $E_0 \tau_D^p < \infty$ 
for all $p$. A theorem of Burkholder \cite{Burk} states,  
$$
  E_0 \log^+ \tau < \infty \implies c_p E_z \tau^{p/2} \leq E_z |Z_{\tau}|^p \leq C_p E_z \tau^{p/2},
$$
for some constants $c_p, C_p$ depending on $p$. It holds that $E_i \log \tau_{\H} < \infty$, and by the layer-cake representation for moments we have
$$
    E_i |Z_{\tau_{\H}}|^p = \int_0^\infty 2r^p(1-h_{\H}^i(r)) dr.
$$
Now suppose $F_{1/4}$ is a covering map from $\H$ into $D$. Then, by construction, $h_D^0(r) = h_{\H}^i(r)$. This will lead to a 
contradiction. It holds for all $p$ that $E_0[\tau_D^p] < \infty$, which implies $E_0 |Z_{\tau_D}|^p$ is finite as well. 
Then by the layer-cake representation and equality of $h$-functions, it holds that $E_i |Z_{\tau_\H}| = E_0|Z_{\tau_D}|$. 
A direct computation shows $E_i|Z_{\tau_\H}|$ is infinite, hence $F_{1/4}$ cannot be a covering map, and so the projection of $\tau_{\H}$ is 
not the hitting time of the domain $D$. For the case of $F_{1/4}$, the path in $\H$ determines which arcs
act as the boundary. From the image, for a chosen path, two collinear lines act as the boundary and the hitting of these lines will have the same exit moments as the half plane 
(see the $\alpha=1/2$ case).  
For clarity, it was mentioned in the introduction that upon choosing appropriate base points $a\in\D$ and $b\in\C\setminus\D$ 
we have $h_\D^a(r) = h_{\C\setminus\D}^b(r)$. It is well-known that $E_a[\tau_\D^p] < \infty$ for all $p$, and $E_b[\tau_{\C\setminus\D}^p] = \infty$ for all $p$.
However, the argument above does not apply as $E_b[\log^+ \tau_{\C\setminus\D}] = \infty$. 

For $F_{1/4}$ there were $4$ boundary components, in general $F_\alpha(\R)$ has $\fr{1}{2}\lcm(2, \fr{2\pi}{\alpha})$ components. If $\alpha$ is incommensurable  
with respect to $f$, then there will be infinitely many components and $F(\R)$ will be a dense subset of $r\D^c$, where $r := h^{-1}(0)$.
    \begin{figure}[H]
        \includegraphics[width=120pt, height=120pt]{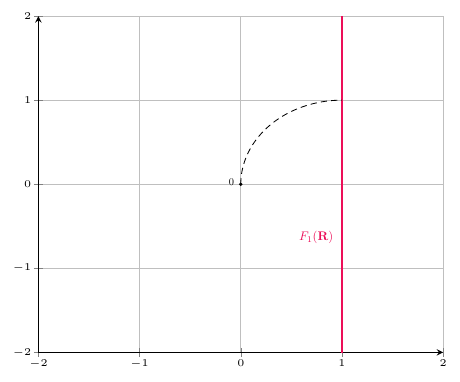}
        \includegraphics[width=120pt, height=120pt]{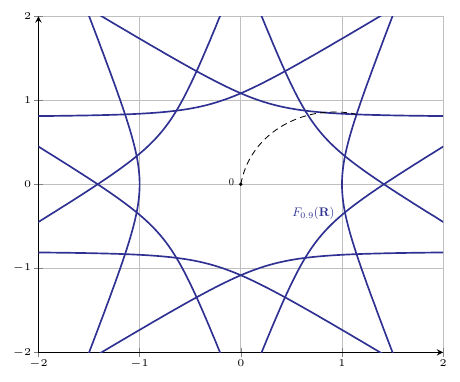}
        \includegraphics[width=120pt, height=120pt]{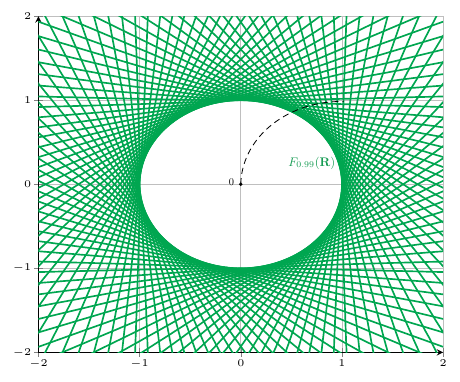}
        \caption{Domains generated from the $h$-function of $\H$ with $\alpha =1, 0.9, 0.99$}\label{F099}
    \end{figure}
The dashed line is the image of the line $\{z = 0.5+ it, \;t\in\R^+\}$, for $\alpha=0.9$ we find $10$ boundary components, and for $\alpha=0.99$ we find $100$.

We will continue exploring examples. Define $\Omega_n:= \C \setminus \{z: |z| \geq 1 \text{ and }\Arg z^n = 0\}$, so the plane with $n$ equally spaced
radial rays deleted. The $h$-function of $\Omega_n$ is given by $h_{\Omega_n}^0(r) := \fr{2}{\pi}\arctan\sqrt{r^n - 1}$, with inverse $g_n(s) = \sqrt[n]{\sec^2(\pi s/2)}$.
If $n=2$ we recover the previous example. We will project the same dashed line as the previous example, for the choices $\alpha = 1$ and $n=1, 1.5, 2, 4, 8$, we find the domains,
    \begin{figure}[H]
        \includegraphics[width=82pt, height=82pt]{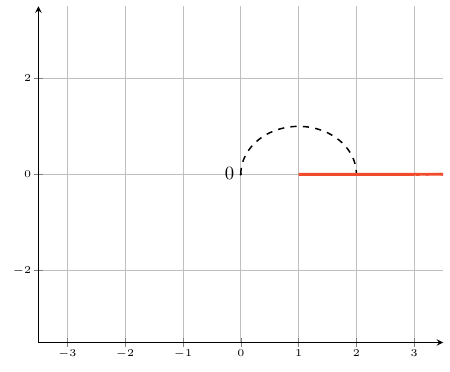}
        \includegraphics[width=82pt, height=82pt]{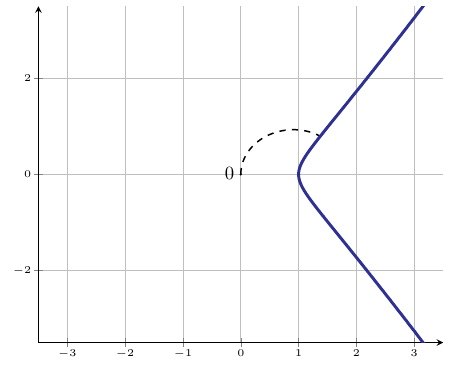}
        \includegraphics[width=82pt, height=82pt]{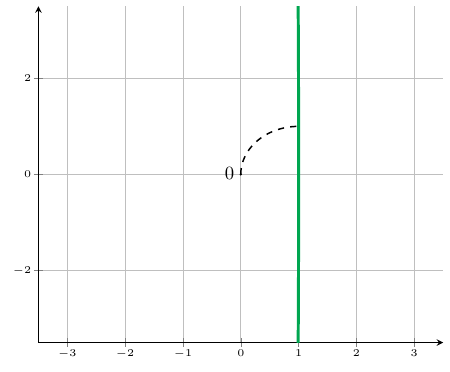}
        \includegraphics[width=82pt, height=82pt]{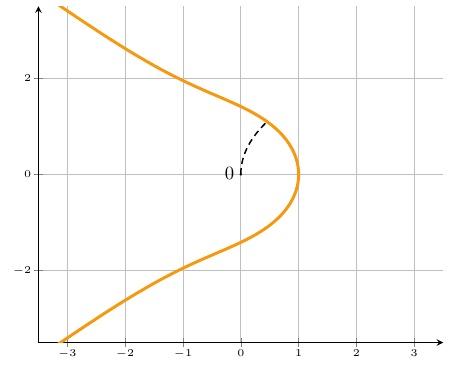}
        \includegraphics[width=82pt, height=82pt]{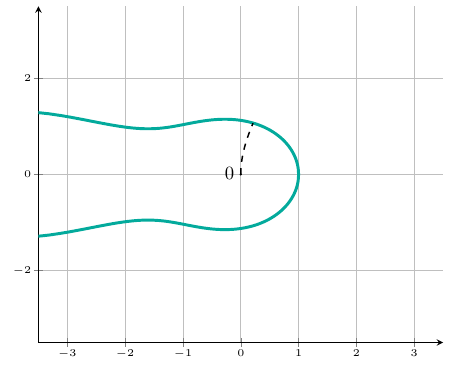}
        \caption{Domains generated with $n=1,1.5,2, 4,8$}
    \end{figure}
As $n$ becomes larger, the rays begin to fill the outside of the unit disk. Harmonic measure is supported on the bits of the boundary that ``stick out'' (see \cite{makarov}), 
these are the segments near $\partial \D$. This is intuitively seen in the behaviour of Brownian motion - if the gaps between the rays are small, Brownian motion won't
stay long between them. This is captured by the fact $g_n \to 1$ as $n\to\infty$, the inverse $h$-function of $\D$. We see in the figure above that as $n$ increases the boundary bends
down to enclose the unit disk. In the case of $n=8$, choosing $\alpha = 1/8$ yields a domain with $8$ boundary components, which yields the expected outcome of the domain recovered being $\Omega_8$ (here there is a natural choice of a domain with $8$ boundary components). For a general $n$ we can choose $\alpha = 1/n$, and the recovered domain is $\Omega_n$.

We will cover three more examples, the first emphasising results from Corollary \ref{corr1}. Given some domain $D$ which contains the origin and given the $h$-function of $D$, suppose 
the function of Theorem \ref{theorem1} produces a hitting-time of some domain $\tilde D$. Then $D$ and $\tilde D$ share the same $h$-function and only $\tilde D$ is symmetric with
respect to $\R$, a symmetrisation which leaves the $h$-function invariant. 
Consider the $h$-function of a standard equilateral triangle $T$ centred at the origin with vertices $i$, $\exp (4\pi i/3)$, $\exp(5\pi i /3)$. We recover the following two domains for
the choices of $\alpha = 1, 1/2$.
    \begin{figure}[H]
       \includegraphics[width=120pt, height=120pt]{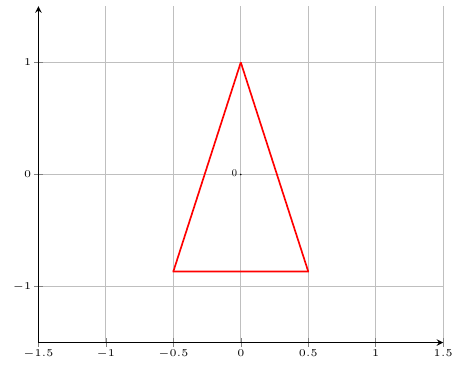}
        \includegraphics[width=120pt, height=120pt]{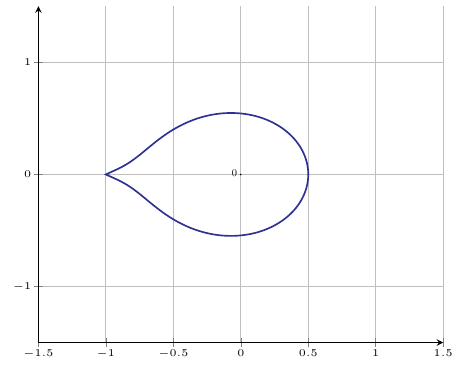}
        \includegraphics[width=120pt, height=120pt]{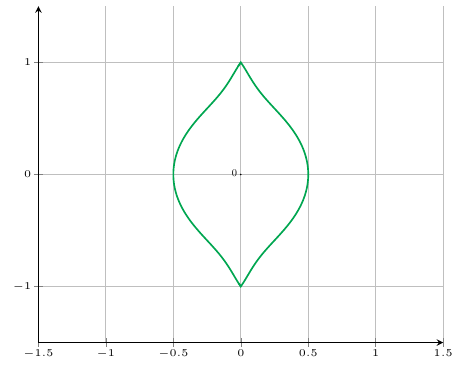}
       \caption{Domains from the $h$-function of $T$ with $\alpha=1, 1/2$.}
    \end{figure}
Of course rotating $T$ would yield a domain symmetric with respect to $\R$ with the same $h$-function. To find $h_T^0$, we used a Schwarz-Christoffel mapping, and to save time we
found the inverse with Mathematica. This was too slow computationally to determine the behaviour of the associated holomorphic function on the interior. 

The previous examples were built upon known $h$-functions. To demonstrate the robustness of the proposed method, we will investigate an example which is detached from 
the usual examples. Let $h$ be the function with inverse $g$ given by
$$
    g(s) = \fr{1}{\sqrt{1+a^{-1}}}\,\sqrt{\fr{1}{a(1-s)^n}+1}, \; \; \; s\geq 0.
$$
Now choose $\alpha = 1, n=4$ and $a=100$.
    \begin{figure}[H]
        \includegraphics[width=120pt, height=120pt]{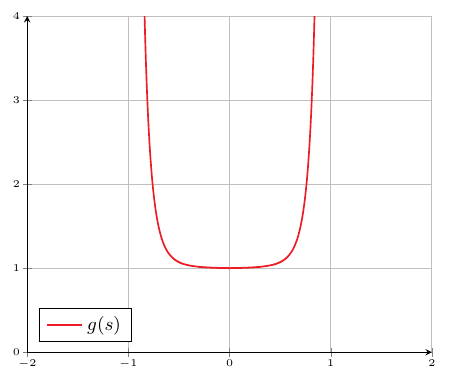}
        \includegraphics[width=120pt, height=120pt]{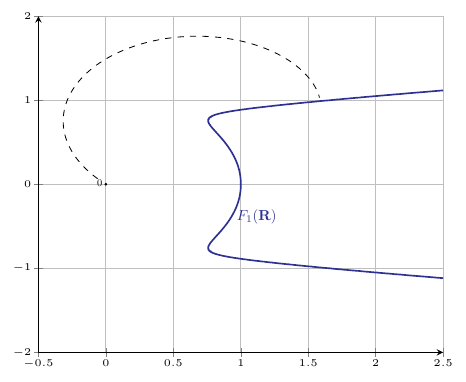}
       \caption{Domains from the $h$-function of $g$ with $\alpha=1$.}
    \end{figure}
The dashed line is the projection of the set $\{z=0.75 + it: t\in\R^+\}$. Let us call the
recovered domain $D$. The function $g(s)$ is ``close'' to $1$ on $(-0.5, 0.5)$,
therefore in the image Brownian motion has a high chance to exit at a distance
$1$ from $0$. This explains the appearance of a segment which approximates a portion of a circle in $F_1(\R)$. Away
from this interval $g$ sharply increases to $\infty$, so if Brownian motion
does not exit near $1$, there is a high probability that it exits far from $1$. In
$D$ this is obvious, if Brownian motion gets away from the origin it has ample 
space to move freely. Another way to see this is the following: the inverse
of the $h$-function of the Koebe domain $K:=\C\setminus[1,\infty)$ with
base-point $0$ is $\sec^2(\pi s/2)$. This function has a singularity of order
$2$ at $1$, and so does the function $g(s)$. Therefore we should expect the extremal behaviour of the exit
position between $D$ and $K$ to be comparable. 

In our final example we come back to the ``circle domains'' as mentioned in the introduction, domains of the type studied in \cite{MALWsurv}. Define $h:(0,\infty)\to\R$ by,
$$
    h(r) := \begin{cases} 0, \; \; &0 \leq r\leq1 \\ 0.5 \; \; &1<r\leq 2 \\ 1 \; \; &2<r \end{cases}.
$$
The generalised inverse $g$ is given by $g(s) = 1$ if $s \leq 0.5$ and $g(s) = 2$ if $0.5 < s \leq 1$. We can write down an expression for $H \ln g$ to save some accuracy and time that would 
be lost in the approximation. If we regard $g$ as an even periodic function on $\R$, $\ln g$ takes the form,
$$
    \ln g(s) = \ln(2) \sum_{k\in\Z} 1_{[\fr{1}{2}, \fr{3}{2})+2k}(s),
$$
where $[1/2, 3/2) + 2k = [1/2+2k, 3/2+2k)$. The Hilbert transform of a characteristic function is well-known, $H 1_{[a, b)} = \fr{1}{\pi} \ln| \fr{x-a}{x-b}|$, so by linearity of the Hilbert transform,
$$
    H(\ln g) = \fr{\ln 2}{\pi}\sum_{k\in \Z} \ln \left|\fr{s-2k-\fr{1}{2}}{s -2k - \fr{3}{2} }\right|.
$$
Choose $\alpha = 1$, and let $f$ be the holomorphic function associated with Theorem \ref{theorem1}. 
\begin{figure}[H]
        \includegraphics[width=130pt, height=130pt]{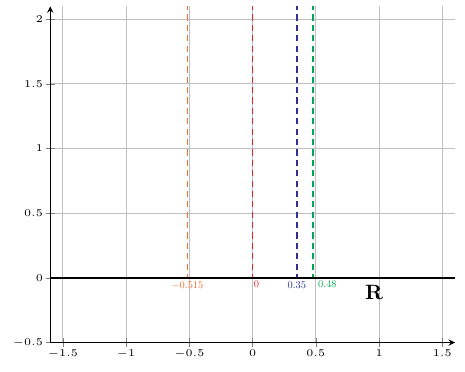}
        \includegraphics[width=130pt, height=130pt]{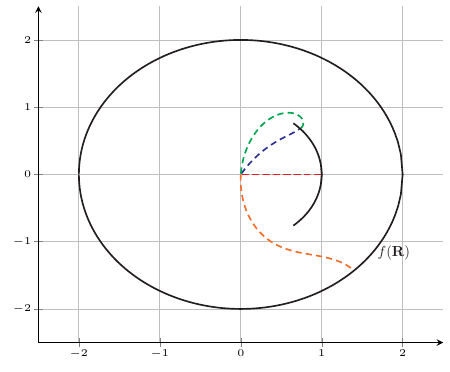}
        \caption{Domain recovered from the $h$-function of a circle domain.}\label{fig8}
\end{figure}
Let the inner circular arc be denoted by $A$. By construction of $h$, if Brownian motion in $\mc{C}$ exits on $(0.5, 1)$ then in the image it exits through the outer circle.
From the figure we see that $f$ is $2$ to $1$ on $f^{-1}(A)$, which distinguishes which side of $A$ Brownian motion exits through. Approximately, we find that when the exit
position is between $(-0.5, -0.4)\cup(0.4, 0.5)$ in the image, Brownian motion exits on the right side of $A$, while if the exit position is within $(-0.4, 0.4)$ 
in the image it exits on the left side of $A$. From this we see there is approximately a probability of $10\%$ that Brownian motion exits on the right side. 
This example shows that the proposed method can recover non-simply connected domains. \\

The relation between the stopping time and the boundary of the image can be well understood by the theory of Riemann surfaces. This is not our central focus, but we will briefly explain. Let $f$ be the function of Theorem \ref{theorem1}, and consider the Riemann surface $X$ on which $f^{-1}$ is single valued (a construction can be found in \cite{ahlfors}). 
The examples of Figure \ref{F014}, Figure \ref{F099} and the definition of $\mc{H}$ hint that the projected stopping time behaviour can be understood as the projection
of the explosion time of Brownian motion defined on $X$. Typically, one pictures $X$ as being spread out in twisted sheets over the plane. The boundary of one sheet can lie over the interior of another, and this explains the complicated behaviour of the projected hitting times.
In the previous examples the projected processes pass through the image of boundary or seem to stop in the interior of the domain, and from the perspective of $X$ this occurs simply due to the Brownian motion travelling on different sheets of the Riemann surface. If $f$ is a covering map, the boundaries of the sheets lie directly over each other, and there is no difficulty in the projected stopping time. More details on this idea will be contained in the second author's forthcoming PhD thesis.

\subsection{Numerical Construction of the Map }
Numerically our only challenges are the accurate computation of Poisson integral and the Hilbert transform.
We employed the ``tanh-sinh'' quadrature method, discovered by H. Takahasi and M. Mori in 1974. The error can be expressed as $O(\exp(-CN/\ln N))$, where $N$ is the number of
function evaluations\cite{quad}. First define $\phi(t) = \tanh(\pi/2 \sinh(t))$. Then by substitution and the trapezoidal rule with spacing $h$ we obtain,
\begin{align*}
    \int_{-1}^1 f(t) dt =& \int_{-\infty}^{\infty} f(\phi(t)) \phi'(t) dt \\&\approx 
    \fr{h\pi}{2}\sum_{k=-\infty}^\infty \fr{\cosh(kh)}{\cosh^2(\fr{\pi}{2}\sinh kh)} f\left(\tanh(\fr{1}{2}\pi\sinh kh)\right).
\end{align*}
As a choice, if we calculate this summation to $M$ terms, we choose $h = 2/M$. 
The integrand decays at a double exponential rate, so this method is suited for integrating functions that have singularities at the end points. We often are in this case as if a domain is unbounded, 
then the inverse of it's $h$-function has a singularity at $1$. 
With this in mind, it was crucial that we found periodic expressions for the Poisson integral and Hilbert transform. We will first explain optimisations we used for the Poisson integral. 

When $y\to0$ 
the Poisson kernel tends to a delta spike at $x$. We are then forced to make $h$ very small. However most of this computation is wasted, as the mass of the integral
is at $x$. We instead compute
$$
         (P_y\ast f)(x) =  \int_{-1}^x\fr{\sinh\pi y}{\cosh\pi y - \cos\pi(x-t)} f(t) dt 
         + \int_{x}^1\fr{\sinh\pi y}{\cosh\pi y - \cos\pi(x-t)} f(t) dt,
$$
and then make a linear substitution to each integral so the bounds are $-1 \leq
t\leq 1$. The chosen quadrature method sends the end points to $\infty$. 
Given the rapid decay of the integrand decays, the delta spike behaviour can then
be accurately computed. If we restrict the sets in $\H$ we wish to
compute to $\{x + it:  t\in\R^+\}$, we pre-compute every term in the
quadrature formula which only involve $x$. This is necessary to numerically study the interior behaviour of the constructed holomorphic function.

For the Hilbert transform, one could use the fast Fourier transform algorithm,
as $\mc{F}(H(f))(\zeta) = -i\sgn(\zeta)\mc{F}f(\zeta)$. However, given the bounded domain we opted for the quadrature method instead. In computing the integral
$$
    (Hf)(x) = \lim_{\eps\to0} \int_{\eps}^1 (f(x-t) - f(x+t))\cot\fr{\pi}{2} t dt,
$$
again we make a linear substitution so the bounds are $-1\leq t\leq 1$, and
we choose some sufficiently small $\eps$. We pre-compute any terms which do not
involve $t$, and then reuse these computations
wherever necessary.

When the Hilbert transform of a function is not known, the largest obstacle in
computing the interior behaviour of the holomorphic function of Theorem
\ref{theorem1} is the $P_y\ast(H\ln g)$ term. If we even want to compute one
value of $P_y \ast H f$, we will need to compute the Hilbert transform for
every-point required in the quadrature expression for the Poisson integral. To
cut down on computation, we pre-compute the Hilbert transform on each of the points 
which appear in the computation for the Poisson integral.

\bibliographystyle{plainurl}
\bibliography{citation}
\end{document}